\newcommand{\be}{\begin{equation}}
\newcommand{\ee}{\end{equation}}
\newcommand{\R}{\mathbb{R}}
\newcommand{\tp}{\text{T}}
\newtheorem{theorem}{Theorem}
\newtheorem{corollary}[theorem]{Corollary}
\theoremstyle{definition}
\newtheorem{remark}[theorem]{Remark}
\begin{document}
\title{Taylor Expansion of \\homogeneous functions}
\author{Joachim Paulusch and Sebastian Schlütter}
\date{July 7, 2021, this version: \today}
\maketitle
\begin{abstract}
We derive the Taylor polynomial of a function which is $m$-times continuously differentiable and positive homogeneous of order $m$. The Taylor polynomial in $a$ for $f(b)$ of order $m$ in general is a polynomial of order $m$ in $b-a$. If the given function is positive homogeneous of order $m$, the Taylor polynomial is a polynomial in $b$ rather than $b-a$, and the order of all terms is $m$. The result can be applied to powers of homogeneous functions of order $1$ as well.

The result was stated without proof by the International Actuarial Association in 2004.
\end{abstract}
We consider Taylor polynomials of homogeneous functions. Derivatives of higher order are denoted by
\be 
d^m f(x;t) = \sum_{k \in \{1,\ldots, n\}^m} D_k f(x) t_{k_1} \cdots t_{k_m}.
\ee
Let $U \subseteq \R^n$ be open and $f:U \to \R$ have continuous partial derivatives of order $m$ in $U$. Let $a, b \in U$ and the line segment connecting $a$ and $b$ in $U$. The Taylor polynomial for $f(b)$ of order $m$ at $a$ is given by 
\be 
T^{(m)}f(a; b-a) = f(a) + \sum_{k=1}^m \frac{1}{k!} d^k f(a; b-a) .
\ee
\begin{theorem} \label{TaylorThm}
Let $U \subseteq \R^n$ be open and $f:U \to \R$ be positive homogeneous\footnote{I.e. $f(\lambda x) = \lambda^m f(x)$ for all $x\in U$, $\lambda > 0$ with $\lambda x \in U$. This gives rise to a unique extension of $f$ to the set $\{ \lambda x\,|\, x \in U, \lambda >0\}$. In case $0 \in U$, this extends $f$ to $\R^n$.} of degree $m$ and have continuous partial derivatives of order $m$ in $U$. Let $a, b \in U$ and the line segment connecting $a$ and $b$ in $U$. The Taylor polynomial for $f(b)$ of order $m$ at $a$ is given by 
\be 
T^{(m)}f(a; b-a) = \frac{1}{m!} \, d^m f(a;b) .
\ee
\end{theorem}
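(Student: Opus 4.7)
My plan is to start from the right-hand side and expand $b = a + (b-a)$ inside $d^m f(a;b)$. Since equality of mixed partials turns $d^m f(a;\cdot)$ into the diagonal of a symmetric $m$-linear form, a binomial-style expansion gives
\be
d^m f(a;b) = \sum_{j=0}^m \binom{m}{j}\, d^m f\bigl(a;\, (b-a)^{(j)}, a^{(m-j)}\bigr),
\ee
where the notation denotes $j$ slots filled with $b-a$ and $m-j$ slots filled with $a$. Matching this to the Taylor polynomial $\sum_{j=0}^m \frac{1}{j!}\, d^j f(a; b-a)$ then reduces the theorem to one identity per $j$.

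\textbf{Key identity via Euler.} The central lemma I would establish is
\be
d^m f\bigl(a;\, t^{(j)}, a^{(m-j)}\bigr) = (m-j)!\, d^j f(a;t) \qquad (0 \le j \le m).
\ee
To see this, fix $t$ and set $h(x) := d^j f(x;t)$. Then $h$ is positive homogeneous of degree $m-j$, because every $j$-th partial derivative of an order-$m$ homogeneous function is order-$(m-j)$ homogeneous. Applying Euler's relation $m-j$ times---most cleanly by differentiating $\lambda \mapsto h(\lambda x) = \lambda^{m-j} h(x)$ repeatedly at $\lambda = 1$---gives $d^{m-j} h(x;x) = (m-j)!\, h(x)$. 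Unfolding the left side back to partial derivatives of $f$ produces $d^m f(a;\, a^{(m-j)}, t^{(j)})$, which by symmetry is the asserted identity.

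\textbf{Conclusion and main difficulty.} Substituting the identity into the expansion yields
\be
d^m f(a;b) = \sum_{j=0}^m \binom{m}{j}(m-j)!\, d^j f(a;b-a) = m! \sum_{j=0}^m \frac{1}{j!}\, d^j f(a;b-a),
\ee
and dividing by $m!$ reproduces $T^{(m)} f(a; b-a)$ exactly (with the convention $d^0 f(a;\cdot) := f(a)$). I expect the main obstacle to be purely notational: carefully verifying that differentiating $h(x) = d^j f(x;t)$ a further $m-j$ times in the direction $x$ equals $d^m f(x;\cdot)$ evaluated with $j$ copies of $t$ and $m-j$ copies of $x$. Once this symmetric-form bookkeeping is spelled out (using $C^m$ smoothness so mixed partials commute), the algebraic collapse is immediate and no remainder term survives, which is consistent with the fact that an order-$m$ homogeneous $C^m$ function whose $m$-th derivative is constant on rays must coincide with its own $m$-th order Taylor approximation.
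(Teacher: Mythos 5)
Your proposal is correct, and it runs the paper's argument in the opposite direction in a way that genuinely changes the combinatorics. Both proofs rest on the same two ingredients: symmetry of the mixed partials and the iterated Euler relation, which in your notation reads $d^m f(a;\, t^{(j)}, a^{(m-j)}) = (m-j)!\, d^j f(a;t)$ and in the paper's appears as $D_{\mu_1\dots \mu_k} f(a)\, a^{\mu_k} = (m-k+1)\, D_{\mu_1\dots \mu_{k-1}} f(a)$, iterated to give $D_{\mu_1\dots \mu_k} f(a) = \frac{1}{(m-k)!} D_{\mu_1\dots \mu_m} f(a)\, a^{\mu_{k+1}} \cdots a^{\mu_m}$. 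The difference is the direction of travel. The paper starts from the Taylor polynomial, rewrites each term as a contraction of the top derivative with copies of $a$, then expands every factor $(b-a)$ binomially; this produces a double sum over mixed monomials in $a$ and $b$, and finishing requires the alternating identity $\sum_{k=q}^m (-1)^{k-q}\binom{m}{k}\binom{k}{k-q} = 0$ for $q < m$ to kill all cross terms. You instead start from $d^m f(a;b)$, expand $b = a + (b-a)$ in the symmetric $m$-linear form, and apply Euler once per term; the coefficients $\binom{m}{j}(m-j)! = m!/j!$ then match the Taylor terms one-to-one, so no cancellation argument is needed at all. Your route is shorter and arguably more transparent, at the cost of having to set up the multilinear-form bookkeeping you flag yourself (that contracting $m-j$ slots of $D_{\mu_1\dots\mu_m} f$ with $a$ really is the $(m-j)$-th derivative of $h(x) = d^j f(x;t)$ in direction $x$); in Einstein notation that verification is the one-line computation above, so the gap you anticipate is harmless.
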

This means that the Taylor polynomial is a polynomial in $b$ rather than $b-a$ and the order of all terms is $m$, i.e. a homogeneous polynomial of order $m$.  

Theorem \ref{TaylorThm} is stated without proof in \cite[p. 170]{IAA2004}. A proof for $m=2$ is given in \cite[p. 564]{Sandstroem2016}.

Theorem \ref{TaylorThm} immediately implies
\begin{corollary} \label{TaylorCor}
Let $U \subseteq \R^n$ be open and $f:U \to \R$ be positive homogeneous of degree $1$ and have continuous partial derivatives of order $\ell$ in $U$. Let $a, b \in U$ and the line segment connecting $a$ and $b$ in $U$. Then, for all $1 \leq m \leq \ell$, we have
\be 
T^{(m)}f^m (a; b-a) = \frac{1}{m!} \, d^m f^m (a;b) .
\ee
\end{corollary}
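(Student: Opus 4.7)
The plan is to deduce Corollary \ref{TaylorCor} directly from Theorem \ref{TaylorThm} by applying it to the auxiliary function $g := f^m$. I would first verify that $g$ satisfies the hypotheses of the theorem, and then simply read off the conclusion.

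Concretely, the first step is to check the homogeneity of $g$. Since $f$ is positive homogeneous of degree $1$, we have $f(\lambda x) = \lambda f(x)$ for all admissible $\lambda > 0$, so
\be
g(\lambda x) = f(\lambda x)^m = \lambda^m f(x)^m = \lambda^m g(x),
\ee
which shows that $g$ is positive homogeneous of degree $m$ on $U$.

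The second step is to check the smoothness of $g$. Because $m$ is a positive integer with $m \le \ell$, the function $g = f^m$ is an $m$-fold product of the $C^\ell$ function $f$ with itself, and the product rule shows inductively that $g$ has continuous partial derivatives up to order $\ell$ on $U$; in particular $g \in C^m(U)$, which is all we need.

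Finally, having verified both hypotheses, I would invoke Theorem \ref{TaylorThm} with $f$ replaced by $g$ on the same segment from $a$ to $b$ to conclude
\be
T^{(m)} f^m(a;b-a) = T^{(m)} g(a;b-a) = \frac{1}{m!}\, d^m g(a;b) = \frac{1}{m!}\, d^m f^m(a;b).
\ee
No real obstacle is anticipated; the only nontrivial verification is the smoothness of $f^m$, and this is immediate from $m \le \ell$ and the Leibniz rule since $m$ is a nonnegative integer (so we never leave the regime where $f^m$ inherits all derivatives of $f$, regardless of the sign of $f$).
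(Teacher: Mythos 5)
Your proposal is correct and follows exactly the route the paper intends: the paper states that Theorem \ref{TaylorThm} ``immediately implies'' the corollary, i.e.\ one applies the theorem to $g = f^m$, which is positive homogeneous of degree $m$ and of class $C^m$ since $m \leq \ell$. Your explicit verification of these two hypotheses is precisely the (omitted) content of the paper's argument.
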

\begin{remark}
Note that in case $0 \in U$ it can be shown that a homogeneous function $f$ of order $m$ actually {\em is} a homogeneous polynomial of order $m$, cf. \cite[p. 175]{Lang1987}. A typical example not covered by this but Theorem \ref{TaylorThm}, or Corollary \ref{TaylorCor} respectively, is
\be f(x) = \sqrt{x^\tp R \,x},\ee
wherein $R$ is a matrix and $x^\tp R \,x > 0$. This can be applied to models for the aggregation of risk, c.f. \cite{Paulusch2021}.
\end{remark}
\begin{proof}[Proof of Theorem \ref{TaylorThm}]
To facilitate the handling of many dimensions we will use Einstein's summation convention, i.e. we write 
\be 
\sum_{k \in \{1,\ldots, n\}^m} D_k f(x) t_{k_1} \dots t_{k_m} = D_{\mu_1 \dots \mu_m} f(x) t^{\mu_1} \dots t^{\mu_m}. 
\ee
Note that Euler's Theorem on homogeneous functions \cite[p. 6]{Tasche1999} implies
\be \label{EulerEinsteinEqn}
D_{\mu_1\dots \mu_k} f(a) a^{\mu_k} = \left( m - (k-1) \right) D_{\mu_1\dots \mu_{k-1}} f(a), 
\ee
as $D_{\mu_1\dots \mu_{k-1}} f$ is homogeneous of order $\left( m - (k-1) \right)$. Moreover, by continuously differentiability, we have the symmetry
\be 
D_{\mu_1\dots \mu_k} f(a) = D_{\mu_{\sigma(1)}\dots \mu_{\sigma(k)}} f(a), 
\ee
for all $1 \leq k \leq m$ and permutation $\sigma$. Taylor's formula reads in Einstein notation
\be 
T^{(m)}f(a; b-a) = f(a) +  \sum_{k=1}^m \frac{1}{k!} D_{\mu_1\dots \mu_k} f(a) (b-a)^{\mu_1} \cdots (b-a)^{\mu_k}.
\ee
We use \eqref{EulerEinsteinEqn} to write 
\be 
D_{\mu_1\dots \mu_k} f(a)  = \frac{1}{(m-k)!} D_{\mu_1\dots \mu_m} f(a)  a^{\mu_{k+1}} \cdots a^{\mu_m}, 
\ee
that is (using the symmetry of the derivatives)
\be 
\begin{split}
& T^{(m)}f(a; b-a) = \frac{1}{m!} D_{\mu_1\dots \mu_m} f(a) \sum_{k=0}^m {m \choose k} (b-a)^{\mu_1} \cdots (b-a)^{\mu_k} a^{\mu_{k+1}} \cdots a^{\mu_m} \\
& \quad = \frac{1}{m!} D_{\mu_1\dots \mu_m} f(a) \sum_{k=0}^m {m \choose k} a^{\mu_{k+1}} \cdots a^{\mu_m}
\sum_{p=0}^k (-1)^p {k \choose p} b^{\mu_1} \cdots b^{\mu_{k-p}} a^{\mu_{k-p+1}} \cdots a^{\mu_k} \\
& \quad = \frac{1}{m!} D_{\mu_1\dots \mu_m} f(a) \sum_{k=0}^m \sum_{p=0}^k (-1)^p {m \choose k} {k \choose p} b^{\mu_1} \cdots b^{\mu_{k-p}}a^{\mu_{k-p+1}} \cdots  a^{\mu_m}.
\end{split}
\ee
To collect powers of $b$, we write $q = k-p$ and obtain
\be \label{PowerEqn}
\begin{split}
& T^{(m)}f(a; b-a) = \frac{1}{m!} D_{\mu_1\dots \mu_m} f(a) \sum_{q=0}^m \sum_{k=q}^m (-1)^{k-q} {m \choose k} {k \choose k-q} b^{\mu_1} \cdots b^{\mu_q} a^{\mu_{q+1}} \cdots  a^{\mu_m} \\
& \quad  = \frac{1}{m!} D_{\mu_1\dots \mu_m} f(a) \sum_{q=0}^m  b^{\mu_1} \cdots b^{\mu_q} a^{\mu_{q+1}} \cdots  a^{\mu_m} \sum_{k=q}^m (-1)^{k-q} {m \choose k} {k \choose k-q}.
\end{split}
\ee
For $q = m$, the inner sum over $k$ equals 1. So let $0 \leq q \leq m-1$. We write $p = k-q$, $r = m-q$ and derive for $1 \leq r \leq m$ 
\be
\begin{split}
&\sum_{k=q}^m (-1)^{k-q} {m \choose k} {k \choose k-q} = \sum_{p=0}^{m-q} (-1)^p {m \choose p+q} {p+q \choose p} \\
& \quad = \sum_{p=0}^{r}(-1)^p{m \choose p+m-r} {p+m-r \choose p} = \sum_{p=0}^{r}(-1)^p \frac{m!}{p! \,(m-r)! \,(r-p)!} \\
& \quad = {m \choose r} \sum_{p=0}^{r}(-1)^p {r \choose p} = {m \choose r} (1-1)^r = 0.
\end{split}
\ee
Going back to equation \eqref{PowerEqn}, this implies
\be
T^{(m)}f(a; b-a) = \frac{1}{m!} D_{\mu_1\dots \mu_m} f(a) b^{\mu_1} \cdots b^{\mu_m}, 
\ee
as was to be shown.
\end{proof}



\begin{thebibliography}{n}
\bibitem{IAA2004} International Actuarial Association: A Global Framework for Insurer Solvency Assessment (2004)
\bibitem{Lang1987} Serge Lang: {\em Calculus of Several Variables}. Third Edition, Springer (1987)
\bibitem{Paulusch2021} Joachim Paulusch and Sebastian Schlütter: Sensitivity-implied tail-correlation matrices. {\em Journal of Banking \& Finance} 134 (2022): 106333.
\bibitem{Sandstroem2016} Arne Sandström: {\em Handbook of solvency for actuaries and risk managers: theory and practice.} CRC press (2016)
\bibitem{Tasche1999} Dirk Tasche: Risk contributions and performance measurement. Preprint (1999)
\end{thebibliography}
\end{document}